\newtheorem{thm}{Theorem}
\newtheorem{defn}{Definition}
\newtheorem{lemma}{Lemma}
\newtheorem{pro}{Proposition}
\newtheorem{rk}{Remark}
\numberwithin{equation}{section} \setcounter{tocdepth}{1}
\newcommand{\M}{{\mathcal M}}
\newcommand{\bea}{\begin{eqnarray}}
\newcommand{\eea}{\end{eqnarray}}
\def\M{\mathcal M}
\begin{document}
\title [Quadratic non-stochastic operators]
{Quadratic non-stochastic operators: Examples of splitted chaos}

\author {U.A. Rozikov, \, S.S. Xudayarov}

\address{ U.Rozikov$^{a,b,c}$\begin{itemize}
 \item[$^a$] V.I.Romanovskiy Institute of Mathematics of Uzbek Academy of Sciences;
\item[$^b$] AKFA University, 1st Deadlock 10, Kukcha Darvoza, 100095, Tashkent, Uzbekistan;
\item[$^c$] Faculty of Mathematics, National University of Uzbekistan.
\end{itemize}}
\email{rozikovu@yandex.ru}

\address{S.\  \ Xudayarov\\ Bukhara State University, The department of Mathematics,
11, M.Iqbol, Bukhara city.\, Bukhara, Uzbekistan.}
\email {xsanat83@mail.ru}

\begin{abstract} There is one-to-one correspondence between quadratic operators (mapping  $\mathbb R^m$
to itself) and cubic matrices. It is known that any quadratic operator corresponding to a stochastic
(in a fixed sense) cubic matrix preserves the standard simplex.
In this paper we find conditions on the (non-stochastic) cubic matrix ensuring that corresponding
quadratic operator preserves simplex.  Moreover, we construct several quadratic non-stochastic operators which generate
chaotic dynamical systems on the simplex. These chaotic behaviors are \emph{splitted} meaning that the simplex is
partitioned into uncountably many invariant (with respect to quadratic operator)
subsets and the restriction of the dynamical system on each invariant set is chaos in the sense of Devaney.

\end{abstract}

\subjclass[2010] {17D92; 17D99; 60J27}

\keywords{quadratic stochastic dynamics; cubic matrix; time; Kolmogorov-Chapman
equation}

\maketitle

\section{Introduction}

Non-linear dynamical systems arise in many problems of biology, physics and other sciences.
In particular, such dynamical systems describe the behavior of populations of different species with population models\footnote{https://en.wikipedia.org/wiki/Chaos$_-$theory} \cite{Rpd}.

It is known that there are some populations with regular behavior and other ones with chaotic behavior \cite{Ba}.
The chaos means sensitivity of behavior of the population to the tiniest changes in initial conditions
 (the initial state of the population) and unpredictable behavior.
 Mathematically studying a chaotic behavior is useful to our understanding of chaos as a phenomenon.
 In this paper we consider several quadratic (non-linear) mappings arising in population dynamics, which
 may generate a chaotic behavior. Let us give some basic notations:

{\bf Chaos.} For discrete-time dynamical systems a mathematical definition of chaos is as follows \cite{De}.
  Let $f$ be a function defined on some state space $X$.
  Denote $f^n(x)$, meaning $f$ is applied to $x\in X$ iteratively $n$ times.

   Furthermore, let $A$ be a subset of $X$. Then $f(A)=\{f(x): x\in A\}$.
   If $f(A)\subset A$, then $A$ is an invariant set under function $f$.

   A continuous map $f:X\to X$ is said to be topologically transitive if,
   for every pair of non-empty open sets $A,B\subset X$, there exists an integer $n$ such that
$f^{n}(A)\cap B\neq \varnothing.$

Devaney's definition (see\footnote{https://plato.stanford.edu/entries/chaos/} \cite{De}, \cite{H} for more details) of chaos is stated as follows:

A continuous map $f$ is chaotic if $f$ has an invariant set $A\subset X$ such that

1) $f$ satisfies weak sensitive dependence on its initial conditions on $A$,

2) The set of points initiating periodic orbits are dense in $A$,

3) $f$ is topologically transitive on $A$.\\

In \cite{Ban} it was observed that sensitive dependence on initial conditions
follows as a mathematical consequence of the other two properties.

Even simple processes can lead to chaos. This is reason why so hard to predict
the weather and the stock market.
One beautiful example is the game of billiards \cite{Rb}. Chaotic models are used in certain populations \cite{112} and in the population growth \cite{113}.
 Chaos can also be found in ecological systems, such as hydrology \cite{114}.
 Some biological application is found in cardiotocography. Models of warning signs of fetal hypoxia
 can be obtained through chaotic modeling \cite{115}.

{\bf Time evolution operators\footnote{https://en.wikipedia.org/wiki/Time$_-$evolution}.}
Time evolution is the change of state by the passage of time.
In general, time is not required to be a
continuous parameter, but may be discrete or even finite.

Consider a system with state space $X$ for which evolution is deterministic and reversible.
 For concreteness let us suppose time set $\mathbb T$ is the set $\mathbb R$ or $\mathbb N_0=\{0\}\cup \mathbb N$.

 Then time evolution is given by a family of state mappings
$$
F_{{t,s}}:X\rightarrow X,\quad \forall t,s\in {\mathbb T},$$
where $F_{t, s}(x)$ is the state of the system at time $t$, whose state at time $s$ is $x$. The following identity holds
\begin{equation}\label{F}
 {F}_{{u,t}}({F}_{{t,s}}(x))={F}_{{u,s}}(x).
\end{equation}
The mappings $F_{t, s}(x)$ are called evolution operators.

A state space with a distinguished evolution operators is called a dynamical system.

{\bf Examples}:

\emph{ 1. Markov process of square matrices.}
One of well studied time evolution is Markov process, which is defined by linear mappings as follows.
A family of stochastic matrices $\{ F_{s,t}=\mathcal U^{[s,t]}: s,t\geq 0\}$
is called a \emph{Markov process} if it satisfies the Kolmogorov-Chapman equation (i.e. equation (\ref{F})):
\begin{equation}\label{KC0}
\mathcal U^{[s,t]}=\mathcal U^{[s,\tau]}\mathcal U^{[\tau,t]}, \qquad \text{for all} \ \ 0\leq s<\tau<t.
\end{equation}
 Let $E=\{1,2,\dots,m\}$. A \emph{distribution} (or \emph{state}) of the set $E$  is a probability measure
 $x=(x_1,\dots,x_m)$, i.e. an element of the simplex:
\[S^{m-1}=\left\{x\in\mathbb R^m: x_i\geq 0, \ \sum_{i=1}^mx_i=1\right\}.\]

Let $x^{(0)}=(x_1^{(0)}, \dots, x_m^{(0)})\in S^{m-1}$ be an initial distribution on $E$.
Denote by $x^{(t)}=(x_1^{(t)}, \dots, x_m^{(t)})\in S^{m-1}$ the distribution of the system at the moment $t$.
For arbitrary moments of time $s$ and $t$ with $s<t$ the matrix $\mathcal U^{[s,t]}=\left(U^{[s,t]}_{ij}\right)$ gives the transition probabilities
from the distribution $x^{(s)}$ to the distribution $x^{(t)}$. Moreover $x^{(t)}$ depends linearly from $x^{(s)}$:
\[x^{(t)}_k=\sum_{i=1}^mU_{ik}^{[s,t]}x^{(s)}_i, \qquad k=1,\dots,m.\]

\emph{2. Quadratic stochastic process.}
Following \cite{LR} denote by $\mathcal S$ the set of all possible kinds of stochasticity and
denote by $\mathbb M$ the set
of all possible multiplication rules of cubic matrices.

Let
$\M^{[s,t]}=\left(P_{ijk}^{[s,t]}\right)_{i,j,k=1}^{\ \underset{m}{}}$ be a cubic matrix with two parameters.

 A family $\{F_{s,t}=\M^{[s,t]}: \ s,t\in\mathbb T\}$ is called
 (see \cite{LR}, \cite{MRX}, \cite{Rpd} for details) a Markov process of cubic matrices (or a quadratic stochastic process)
of type $(\sigma|\mu)$ if for each time $s$ and $t$ the cubic matrix
$\M^{[s,t]}$ is stochastic in sense $\sigma\in\mathcal S$ and satisfies the Kolmogorov-Chapman equation
(for cubic matrices):
\begin{equation}\label{KC}
\M^{[s,t]}=\M^{[s,\tau]}*_\mu\M^{[\tau,t]}, \qquad \text{for all} \ \ 0\leq s<\tau<t.
\end{equation} with respect to the multiplication $\mu\in \mathbb M$.

Quadratic stochastic processes arise naturally in the study of biological and
physical systems with interactions. Assume that the matrix $\left(P_{ijk}^{[s,t]}\right)$ is
3-stochastic (i.e., $P_{ijk}^{[s,t]}\geq 0$ and $\sum_kP_{ijk}^{[s,t]}=1$),
then the probability distribution $x^{(t)}$ (for the quadratic process)
can be found by the formula of the total probability as
\begin{equation}\label{Pi}
x^{(t)}_k= \sum_{i,j=1}^mP_{ijk}^{[s,t]}x^{(s)}_ix^{(s)}_j,
\end{equation}
where $k=1, \dots, m, \ \ 0\leq s<t$.

For case when $P_{ijk}^{[s,t]}$ does not depend on $s,t$ the theory of corresponding quadratic stochastic 
operator is well developed (\cite{GMR}, \cite{HR}, \cite{Ke}, \cite{ly}, \cite{MEN3} - \cite{Sad} and references therein).

\emph{3. Discrete-time quadratic dynamical systems.}
In this paper we consider discrete time, i.e.
$\mathbb T=\mathbb N_0=\{0\}\cup \mathbb N$ and
in the equality (\ref{Pi}) we assume the coefficients  $P_{ijk}^{[s,t]}$ do not depend
on $s$ and $t$, but the cubic matrix $\mathbb P=\left(P_{ijk}\right)$  is not assumed to be stochastic.

In general, a quadratic operator $V$, $V: x\in \mathbb R^m\to x'=V(x)\in \mathbb R^m$ corresponding to a cubic matrix $\mathbb P$ is defined by:
\begin{equation}\label{kv}
 V: x'_k=\sum \limits_{i,j=1}^{m} P_{ij,k} x_ix_j, \ \ k=1, \dots, m.
 \end{equation}

 Without loss of generality we assume  $P_{ij,k}= P_{ji,k}$. Indeed, if this equality is not satisfied then we can introduce
$$
\overline{P}_{ij,k}=\frac{1}{2}(P_{ij,k}+P_{ji,k}).
$$

The aim of this paper is to find conditions on the cubic matrix $\mathbb P$ ensuring that corresponding
operator preserves simplex $S^{m-1}$. Moreover, we want to construct several quadratic (non-stochastic) operators which generate
chaotic dynamical systems on the simplex.

\section{Quadratic non-stochastic operators}

The following theorem gives conditions for coefficients of $V$ to preserve the simplex.

\begin{thm}\label{tq1} For a quadratic operator $V$ (given by (\ref{kv})),  to preserve a simplex $S^{m-1}$ it is {\bf sufficient} that
\begin{itemize}
\item[i)] $\sum\limits_{k=1}^{m}P_{ij,k}=1, \,\,\ i,j=1,\dots, m;$

\item[ii)] $0\leq P_{ii,k}\leq1,\,\,\ i,k=1, \dots, m;$

\item[iii)] $-{1\over m-1}\sqrt{P_{ii,k}P_{jj,k}}\leq P_{ij,k}.$

and {\bf necessary} that the conditions i), ii) and
\item[iii')] $-\sqrt{P_{ii,k}P_{jj,k}}\leq P_{ij,k}\leq 1+\sqrt{(1-P_{ii,k})(1-P_{jj,k})}$
\end{itemize}
are satisfied.
\end{thm}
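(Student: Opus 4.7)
The plan is to decouple the condition "$V(S^{m-1}) \subset S^{m-1}$" into two independent requirements: (A) $\sum_k x'_k = 1$ whenever $\sum_i x_i = 1$, and (B) $x'_k \geq 0$ for every $k$ whenever $x \in S^{m-1}$. Note that each coordinate $k$ can be treated separately.

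For (A), swapping the order of summation gives $\sum_k x'_k = \sum_{i,j}\bigl(\sum_k P_{ij,k}\bigr)x_ix_j$. Hence (i) is sufficient: it reduces this sum to $(\sum_i x_i)^2 = 1$. Conversely, testing at $x = e_i$ and $x = \tfrac{1}{2}(e_i + e_j)$ forces $\sum_k P_{ii,k} = \sum_k P_{ij,k} = 1$ for all $i,j$, so (i) is necessary. Evaluating $x'_k = P_{ii,k}$ at $x = e_i$ and demanding $x' \in S^{m-1}$ immediately yields the necessity of (ii).

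For the sufficiency of (iii), I would write $x'_k = \sum_i P_{ii,k}x_i^2 + 2\sum_{i<j} P_{ij,k}x_ix_j$ and apply (iii) together with (ii) and $x_ix_j \geq 0$ to obtain the lower bound
\[
x'_k \;\geq\; \sum_i a_i^2 \;-\; \frac{2}{m-1}\sum_{i<j} a_i a_j, \qquad a_i := \sqrt{P_{ii,k}}\,x_i \geq 0.
\]
This rewrites as $\frac{m}{m-1}\sum_i a_i^2 - \frac{1}{m-1}\bigl(\sum_i a_i\bigr)^2$, which is non-negative by the Cauchy--Schwarz inequality $(\sum_i a_i)^2 \leq m \sum_i a_i^2$. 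This closes (B) and hence the sufficiency half.

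For the necessity of (iii'), I would restrict $V$ to an edge $x = \alpha e_i + (1-\alpha)e_j$ with $\alpha \in [0,1]$, reducing $x'_k \geq 0$ to non-negativity of $P_{ii,k}\alpha^2 + 2P_{ij,k}\alpha(1-\alpha) + P_{jj,k}(1-\alpha)^2$. Substituting $t = \alpha/(1-\alpha) \in [0,\infty)$ turns this into non-negativity of $P_{ii,k}t^2 + 2P_{ij,k}t + P_{jj,k}$ on the half-line, which via a discriminant/sign analysis using (ii) gives exactly $P_{ij,k} \geq -\sqrt{P_{ii,k}P_{jj,k}}$. The upper bound in (iii') is then obtained by running the same argument on $1 - x'_k = \sum_{i,j}(1-P_{ij,k})x_ix_j$ (which holds thanks to (i)): its "diagonal entries" $1 - P_{ii,k}$ also lie in $[0,1]$ by (ii), so the identical discriminant analysis yields $1 - P_{ij,k} \geq -\sqrt{(1-P_{ii,k})(1-P_{jj,k})}$. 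The main obstacle is the sufficiency step: one must recognize that $-1/(m-1)$ is exactly the critical constant making the matrix $\tfrac{m}{m-1}I - \tfrac{1}{m-1}J$ positive semidefinite (with kernel along $(1,\dots,1)$), so the bound is tight in the worst case. Everything else reduces to one-parameter tests on faces of the simplex.
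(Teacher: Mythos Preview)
Your proof is correct and follows essentially the same strategy as the paper: condition (i) handles the affine constraint, evaluation at vertices and edges gives the necessary conditions, and the sufficiency of (iii) comes from the same quadratic-form estimate (your Cauchy--Schwarz bound $\frac{m}{m-1}\sum a_i^2-\frac{1}{m-1}(\sum a_i)^2\ge 0$ is precisely the paper's sum-of-squares identity $\frac{1}{m-1}\sum_{i<j}(a_i-a_j)^2\ge 0$). The only cosmetic difference is in deriving (iii$'$): the paper analyzes $f(\alpha)\in[0,1]$ directly via the vertex of the parabola, whereas you substitute $t=\alpha/(1-\alpha)$ and obtain the upper bound by the neat duality $P_{ij,k}\mapsto 1-P_{ij,k}$, which is a slightly cleaner packaging of the same computation.
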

\begin{proof} {\bf Sufficiency.} Let $x\in S^{m-1}$. We show that $x'=V(x)=(x_1', \dots, x_m')\in S^{m-1}$.
Using condition i) we get
$$ \sum_{k=1}^mx'_k=\sum_{k=1}^m\sum \limits_{i,j=1}^{m} P_{ij,k} x_ix_j=\sum \limits_{i,j=1}^{m}\left(\sum_{k=1}^m P_{ij,k}\right) x_ix_j$$
$$=\sum \limits_{i,j=1}^{m} x_ix_j= \sum \limits_{i=1}^{m} x_i\sum \limits_{j=1}^{m}x_j=1, \ \ \forall k=1, \dots, m.$$
The condition ii) is needed to have $V(e_i)\in S^{m-1}$, for vertices $e_i=(0,...,0,1,0,...0)$ (here 1 is the $i$th coordinate).
Because in this case $V(e_i)=(P_{ii,1}, \dots, P_{ii,m}).$
Now using iii) we show that $x_k'\geq 0$:
$$ x'_k=\sum \limits_{i,j=1}^{m} P_{ij,k} x_ix_j=\sum_{i<j}\left({P_{ii,k}\over m-1}x_i^2+2P_{ij,k}x_ix_j+{P_{jj,k}\over m-1}x_j^2\right)$$
$$\geq \sum_{i<j}\left({P_{ii,k}\over m-1}x_i^2-{2\over m-1}\sqrt{P_{ii,k}P_{jj,k}}x_ix_j+{P_{jj,k}\over m-1}x_j^2\right)$$
$$=\sum_{i<j}\left(\sqrt{P_{ii,k}\over m-1}x_i-\sqrt{P_{jj,k}\over m-1}x_j\right)^2\geq 0.$$
Therefore the quadratic operator $V$ preserves the simplex.

\begin{rk} \begin{itemize}
\item[1.] In \cite{Sa}, for $m=2$,  it is proven that the conditions i), ii) and iii')
are sufficient and necessary to preserve the simplex.

\item[2.] In case $m\geq 3$ the conditions i), ii) and iii') are not sufficient to preserve
the simplex. Indeed, consider the example (satisfying  i), ii) and iii')):
$$\begin{array}{lll}
P_{ii,1}=1, \ \ P_{ii,k}=0, \ \ \forall i=1,\dots, m; \ \ \forall k=2,\dots, m;\\[2mm]
P_{ij,1}=- \sqrt{P_{ii,1}P_{jj,1}}=-1, \ \ \forall i\ne j;\\[2mm]
 P_{ij,k}\in [0, 2], \forall i\ne j, \, k\geq 2 \ \ \mbox{with} \ \ \sum\limits_{k=2}^{m}P_{ij,k}=2.
 \end{array}$$
Then for the first coordinate of the corresponding quadratic operator $V$ we have
$$x_1'=\sum_{i=1}^mx_i^2-2\sum_{1\leq i<j\leq m}x_ix_j=\left(x_1-\sum_{i=2}^mx_i\right)^2-4\sum_{2\leq i<j\leq m}x_ix_j$$
$$=(2x_1-1)^2-4\sum_{2\leq i<j\leq m}x_ix_j.$$
Take $x\in S^{m-1}$ such that $x_1=1/2$ and $x_j>0$, $j\geq 2$ with $\sum_{j=2}^mx_j =1/2$. Then $x_1'<0$, i.e. $x'=V(x)\notin S^{m-1}$.
\end{itemize}
\end{rk}
{\bf Necessity.}  Now following \cite{Sad} we prove necessity of conditions i)-iii').

Assume $V$ preserves the simplex.
As we mentioned above condition ii) is needed for $V(e_i)=(P_{ii,1}, \dots, P_{ii,m})\in S^{m-1}$, which also
requires $\sum_{k=1}^mP_{ii,k}=1$ (a particular case of the condition i)). To show that i) is necessary, let us take
$x=\alpha e_i+\beta e_j$, where $\alpha,\beta\geq 0$, $\alpha+\beta=1$.
Then for $V(x)$ we have
$$x_k'=P_{ii,k}\alpha^2+2P_{ij,k}\alpha\beta+P_{jj,k}\beta^2, \ \ k=1,\dots,m.$$
Since $V$  preserves simplex, we have
\begin{equation}\label{alpha}
0\leq P_{ii,k}\alpha^2+2P_{ij,k}\alpha\beta+P_{jj,k}\beta^2\leq 1, \ \ k=1,\dots,m.
\end{equation}
and since $\beta=1-\alpha$ we have
$$\sum_{k=1}^mx_k'=\sum_{k=1}^m\left(P_{ii,k}\alpha^2+2P_{ij,k}\alpha(1-\alpha)+P_{jj,k}(1-\alpha)^2\right)=1$$
which for $\alpha=1/2$ by $\sum_{k=1}^mP_{ii,k}=1$ gives i).

To obtain condition iii') in (\ref{alpha}) we denote
\begin{equation}\label{be}
a=P_{ii,k}, \ \ b=P_{ij,k}, \ \ c=P_{jj,k}.
\end{equation}
$$f(\alpha)=(a-2b+c)\alpha^2+2(b-c)\alpha+c.$$
Then inequalities (\ref{alpha}) equivalent to find conditions on
parameters $a,b,c$ such that $f(\alpha)\in [0,1]$ for each $\alpha\in [0,1]$.
We have $f(0)=c\in [0,1]$ and $f(1)=a\in [0,1]$.

{\it Case:} $a-2b+c=0$. That is $b=(a+c)/2\geq 0$. Then $f(\alpha)=2(b-c)\alpha+c$. The graph of this linear function
connects points $(0,c)$ and $(1,a)$ and contained in $[0,1]^2$.

{\it Case:} $a-2b+c\ne 0$. In this case the function $f(\alpha)$ has its extremum point
$$\alpha_0={c-b\over a-2b+c}.$$

Therefore, $f(\alpha)\in [0,1]$ if and only if one of the following conditions holds
\begin{itemize}
\item[(1)] $\alpha_0\notin (0,1)$
\item[(2)] $\alpha_0\in (0,1)$ and $f(\alpha_0)\in [0,1]$.
\end{itemize}
By elementary analysis it is easy to see that solution to the inequalities associated with conditions (1) and (2) is
$$-\sqrt{ac}\leq b\leq 1+\sqrt{(1-a)(1-c)}.$$
This by using notations (\ref{be}) shows that condition iii') is necessary too.
\end{proof}

\begin{defn} A quadratic operator (\ref{kv}), preserving a simplex, is called non-stochastic 
if at least one of its coefficients $P_{ij,k}$, $i\ne j$ is negative.
\end{defn}
\begin{defn} Let $V: S^{m-1}\rightarrow S^{m-1}$ be a quadratic non-stochastic operator (QnSO). It is called a Volterra QnSO (VQnVO) if

{\rm iv)} $P_{ij,k}=0,$ for all $k \notin \{i,j\}$.
\end{defn}

\begin{thm} If for a quadratic operator the conditions i)-iii) of Theorem \ref{tq1} are satisfied then it is not a VQnSO.
\end{thm}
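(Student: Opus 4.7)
The plan is a proof by contradiction. Suppose that $V$ satisfies the sufficient conditions (i)--(iii) of Theorem \ref{tq1} and at the same time is a VQnSO. By the definition of VQnSO, $V$ is non-stochastic, so there exist indices $i_0\neq j_0$ and $k_0$ with $P_{i_0j_0,k_0}<0$. The Volterra condition (iv), applied to the pair $(i_0,j_0)$, kills $P_{i_0j_0,k}$ for all $k\notin\{i_0,j_0\}$; hence $k_0\in\{i_0,j_0\}$, and since $i_0$ and $j_0$ play symmetric roles (we may rename them) I may assume $k_0=i_0$.

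The second step is to apply the Volterra condition (iv) now to the \emph{diagonal} pair $(j_0,j_0)$: it forces $P_{j_0j_0,k}=0$ for every $k\neq j_0$, and in particular $P_{j_0j_0,i_0}=0$. I would then substitute $i=i_0$, $j=j_0$, $k=i_0$ into the sufficient condition (iii) to get
$$
P_{i_0j_0,i_0}\;\geq\;-\frac{1}{m-1}\sqrt{P_{i_0i_0,i_0}\,P_{j_0j_0,i_0}}\;=\;0,
$$
which contradicts the strict inequality $P_{i_0j_0,i_0}<0$.

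The argument is short and there is no real technical obstacle. The only point worth highlighting is that the Volterra condition (iv) has to be used twice: once with $(i,j)=(i_0,j_0)$ to locate the negative coefficient in one of the two ``axis'' slots $k\in\{i_0,j_0\}$, and a second time with $(i,j)=(j_0,j_0)$ to kill $P_{j_0j_0,i_0}$ and thereby collapse the lower bound in (iii) to zero. Once this double use of (iv) is spotted, the inequality in (iii) does all the work.
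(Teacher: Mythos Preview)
Your proof is correct and takes essentially the same approach as the paper: both use the Volterra condition (iv) on a diagonal pair to force $P_{j_0j_0,i_0}=0$ and then invoke (iii) to make the lower bound collapse to zero. The only cosmetic difference is that the paper argues directly that every off-diagonal coefficient is nonnegative, whereas you frame it as a contradiction starting from a single negative coefficient.
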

\begin{proof}
Assume i)-iii) are satisfied then we get
$$P_{kk,k}=1, \ \ \ P_{ki,k}+P_{ik,k}=1.$$
$$ P_{ii,k}=0,\,\,\  i\neq k.  $$
Moreover, by  iii) we have

$$-{1\over m-1}\sqrt {P_{ii,k}}\sqrt {P_{kk,k}}\leq P_{ik,k} \quad \Rightarrow 0\leq P_{ik,k}, \,\ i\neq k.$$
Similarly we get $0\leq P_{ki,k}$. Thus all coefficients of Volterra quadratic operator are non-negative. Hence it is not VQnSO.
\end{proof}

\section{One-dimensional QnSO.}

Here we give a review of results related to the one-dimensional QnSO.
Consider arbitrary QnSO on $S^1$:
\begin{equation}\label{V1}
\begin{array}{ll}
x'=ax^2+2bxy+cy^2\\
y'=(1-a)x^2+2(1-b)xy+(1-c)y^2,
\end{array}
\end{equation}
where
\begin{equation}\label{par}
a,c\in [0,1], \ \ b\in [-\sqrt{ac}, \ \ 1+\sqrt{(1-a)(1-c)}].
\end{equation}

Using $x+y=1$, the operator (\ref{V1}) can be reduced to the function
$$f(x)=(a-2b+c)x^2+2(b-c)x+c.$$
Under condition (\ref{par}) we have $f:[0,1]\to [0,1]$. The dynamical system generated by $f$ can be fully
studied.
In \cite[Section 2.2]{MG}, the case $b\geq 0$ of this (stochastic) operator was studied.
In \cite{Sad} a class of QnSOs $V: S^1\to S^1$ were studied.

Here to avoid several cases, we consider the case $a=c=1$, then $b\in [-1,1]$.
The operator is QnSO iff $b\in [-1,0)$. Therefore, using $x+y=1$ from the second equality of (\ref{V1}) we get
$$y'=2(1-b)y(1-y), \ \ b\in [-1,0).$$
Denote $\mu=2(1-b)$. From  $b\in [-1,0)$ it follows that $\mu\in (2,4]$.

The function $g(y)\equiv g_\mu(y)=\mu y(1-y)$ is well-known as a logistic map.
For an initial point $x_0\in [0,1]$ consider the trajectory (dynamical system):
\begin{equation}\label{tr}
x_{n+1}=g(x_n), \ \ n=0,1,2,\dots
\end{equation}

For $\mu\in (2,4]$ this dynamical system has the following properties\footnote{https://en.wikipedia.org/wiki/Logistic$_-$map}:
\begin{rk}\label{ma}
\begin{itemize}
\item The function $g(y)$ has two fixed points $0$ and $1-{1\over \mu}$.

\item If $\mu$ between 2 and 3, the trajectory will eventually
approach the fixed point $1-{1\over \mu}$, but first will fluctuate around that value for some time.

\item If $\mu$ between 3 and $1 + \sqrt{6}\approx 3.44949$, from almost all initial point $x_0$
the trajectory will approach 2-periodic orbit. These two values are dependent on $\mu$.

\item With $\mu$ between 3.44949 and 3.54409 (approximately), from almost all initial point $x_0$ the trajectory will approach
 4-periodic orbit (permanent oscillations among four values).

\item With $\mu$ increasing beyond 3.54409, from almost all initial points the trajectory will approach
oscillations among 8 values, then 16, 32, etc.

\item At $\mu$ approximately equal to $3.56995$ from almost all initial points, we no longer see oscillations of
finite period. Slight variations in the initial point yield dramatically different results over time, a prime characteristic of chaos.

\item  A rough description of chaos is that chaotic systems exhibit a great sensitivity to initial points. The logistic map for most values of $\mu > 3.56995$ exhibits chaotic behavior.

\end{itemize}
\end{rk}
\begin{rk} We do not know any quadratic {\rm stochastic} operator with chaotic behavior of trajectories.
In the (above considered) case: $a=c=1$, $b\in [-1,0)$, the operator (\ref{V1}) has the form
\begin{equation}\label{Va}
\begin{array}{ll}
x'=x^2+2bxy+y^2\\
y'=2(1-b)xy.
\end{array}
\end{equation}
Since $P_{12,1}=b<0$, this operator is non-stochastic. For arbitrary initial point $(x_0, 1-x_0)\in S^1$, its trajectory has the form $(x_n, 1-x_n)$,
where $x_n$ is defined by (\ref{tr}). Therefore, from above-mentioned properties of the logistic map, it follows that when $-1\leq b< -0.784975$
the operator (\ref{Va}) generates a chaotic dynamical system on the one-dimensional simplex.
\end{rk}

\section{Examples of two-dimensional QnSO}

1. Consider the following example of QnSO on the two-dimensional simplex $S^2$:
\begin{equation}\label{V2}
\begin{array}{lll}
x'=x^2+y^2+z^2-axy-axz+2yz\\[2mm]
y'=(2+a)xy\\[2mm]
z'=(2+a)xz,
\end{array}
\end{equation}
where $a\in [0,2]$.
Note that $P_{12,1}=P_{13,1}=-a/2$.

1.1. {\it Fixed points}. The fixed points are solutions to the system
\begin{equation}\label{Vf}
\begin{array}{lll}
x=x^2+y^2+z^2-axy-axz+2yz\\[2mm]
y=(2+a)xy,\\[2mm]
z=(2+a)xz.
\end{array}
\end{equation}

In case $y=z=0$ we get the fixed point $p_0=(1,0,0)$.
For $y+z\ne 0$ we get $x={1\over 2+a}$. Consequently, the following is a family of fixed points
$$p_y=\left({1\over 2+a}, y, 1-{1\over 2+a}-y\right), \ \ {\rm where} \ \  y\in \left[0, 1-{1\over 2+a}\right].$$

1.2. {\it On invariant sets.} Recall that a  set $M$ is called invariant with respect to an operator $V$ if $V(M)\subset M$.

It is easy to see that the following sets are invariant with respect to (\ref{V2}):

$$M_0=\{(x,y,z)\in S^2: y=0\}, \ \   M_1=\{(x,y,z)\in S^2: z=0\},$$
$$M_\omega=\{(x,y,z)\in S^2: y=\omega z\}, \ \ \omega\in [0,+\infty).$$

Denoting $t=y+z$ we reduce the operator (\ref{V2}) to the following

\begin{equation}\label{VD}
\begin{array}{ll}
x'=x^2+t^2-axt\\[2mm]
t'=(2+a)xt.
\end{array}
\end{equation}
Since $x=1-t$. The operator (\ref{VD}) coincides with (\ref{Va}).
Therefore, under condition $1.56995<a\leq 2$ this operator and  (\ref{V2})
generate chaotic dynamical systems.
Moreover, using $y+z=t$ one can study trajectories of (\ref{V2}) by
related trajectories of (\ref{VD}). For example, if a trajectory $(x_n,t_n)$ of (\ref{VD}) has
a limit, say $(\alpha, \beta)$, then the corresponding trajectory $(x_n, y_n, z_n)$ has property that
$$\lim_{n\to\infty}x_n=\alpha, \ \ \lim_{n\to\infty} (y_n+z_n)=\beta.$$
The fixed point $t_*=1-{1\over 2+a}$ of the function $(2+a)t(1-t)$ gives an invariant set with $y+z=t_*$, i.e.,
$$X=\{(x,y,z)\in S^2: x={1\over 2+a}\}.$$
Note that the above mentioned fixed point $p_y$ is the following
$$p_y=X\cap M_\omega, \ \ {\rm where} \ \ \omega={(2+a)y\over 1+a-(2+a)y}.$$
Moreover,
$$S^2=M_0\cup M_1\cup (\bigcup_{\omega\in (0,+\infty)}M_\omega).$$
Therefore, it suffices to study restrictions of the operator (\ref{V2}) on each these invariant sets.

Restriction of the operator (\ref{V2}) on $M_0$ (using $x=1-z$) can be written as the function
$z'=(2+a)z(1-z)$. Similarly, on $M_1$ one has the same function $y'=(2+a)y(1-y).$

For each $\omega\in (0,+\infty)$, on the set $M_\omega$ the restriction of (\ref{V2}) can be written as
\begin{equation}\label{w}
z'=(2+a)(1-(\omega+1)z)z.
\end{equation}
Multiply both side of (\ref{w}) to $1+\omega$ and denote $\zeta=(1+\omega)z$ then we get
$\zeta'=(2+a)\zeta(1-\zeta).$

Therefore, the trajectories of the operator on the invariants $M_0$, $M_1$ and $M_\omega$ are given by the same logistic function.
For $\mu=2+a$ using facts of Remark \ref{ma} one can give dynamics of these functions
(i.e., the dynamics of operator (\ref{V2}) on the above mentioned invariants). In particular, under condition $1.56995<a\leq 2$
each one-dimensional dynamical system is chaotic.

\begin{rk}
As we have seen, the operator (\ref{V2}) is chaotic for $1.56995<a\leq 2$, but it is not
chaos on the simplex in the sense of Devaney. Because, it is not topologically transitive.
It is \emph{splitted} chaos meaning that the simplex is
partitioned into uncountably many invariant
subsets and the restriction of the operator (\ref{V2}) on each invariant set is chaos in the sense of Devaney.
\end{rk}

2. Consider the following example:
$$\begin{array}{lll}
P_{ii,1}=1, \ \ P_{ii,k}=0, \ \ \forall i=1,2,3; \ \ \forall k=2,3;\\[2mm]
P_{ij,1}=-{1\over 2} \sqrt{P_{ii,1}P_{jj,1}}=-{1\over 2}, \ \ \forall i\ne j;\\[2mm]
 P_{ij,k}\in [0, {3\over 2}], \forall i\ne j, \, k=2,3 \ \ \mbox{with} \ \ P_{ij,2}+P_{ij,3}={3\over 2}.
 \end{array}$$
Then taking some parameters equal to zero we get the following quadratic operator $V$:
\begin{equation}\label{V3}
\begin{array}{lll}
x'=x^2+y^2+z^2-xy-xz-yz\\[2mm]
y'=3xy+ayz\\[2mm]
z'=3xz+(3-a)yz,
\end{array}
\end{equation}
where $a\in (0,3)$.

\begin{rk} In the operator (\ref{V3}) one can also consider the cases $a=0$ and $a=3$.
These cases are more simple than the case $a\in (0,3)$. Because, for example, if $a=0$ then
$z'=3z(x+y)=3z(1-z)$, i.e., the variable $z$ has dynamics independent from other variables.
Therefore, below we consider the case $a\ne 0$, $a\ne 3$.

\end{rk}

2.1. {\it Fixed points}. It is easy to see that the fixed points of the operator (\ref{V3}) are
\begin{equation}\label{ss}
\begin{split}
s_1=(1,0,0), s_2=\left({1\over 3}, 0, {2\over 3}\right), s_3=\left({1\over 3}, {2\over 3}, 0\right),\\[3mm]
s_4=\left({a^2-3a+3\over a^2-3a+9}, {2a\over a^2-3a+9}, {2(3-a)\over a^2-3a+9} \right).
\end{split}
\end{equation}

\begin{defn}\cite{De}. A fixed point $x^*$ of the operator  $V$ is called hyperbolic if its
Jacobian $J$  at $x^*$ has no eigenvalues on the unit circle.
\end{defn}

\begin{defn} \cite{De}. A hyperbolic fixed point $x^*$ is called:
\begin{itemize}
  \item [i)]  attracting  if all the eigenvalues of the Jacobian $J(x^*)$ are less than 1 in absolute value;
  \item [ii)] repelling   if all the eigenvalues of the Jacobian $J(x^*)$ are greater than 1 in absolute value;
 \item [iii)] a saddle    otherwise.
 \end{itemize}
\end{defn}

 To study the type of each fixed point rewrite operator (\ref{V3}) (using $x=1-y-z$) as
 \begin{equation}\label{W}
W: \begin{array}{ll}
y'=3xy+ayz=y(3-3y+(a-3)z)\\[2mm]
z'=3xz+(3-a)yz=z(3-ay-3z).
\end{array}
\end{equation}
Note that $W$ maps the set $T=\{(y,z)\in [0,1]^2: y+z\leq 1\}$ to itself.

The Jacobian of $W$ at point $(y,z)$ is
$$J_W(y,z)=\left(\begin{array}{cc}
3-6y+(a-3)z & (a-3)y\\[2mm]
 -az & 3-ay-6z
 \end{array}\right).$$

 For eigenvalues of the Jacobian at fixed points we have
 \begin{itemize}
 \item[Case $s_1$:]
 $$\lambda_1=\lambda_2=3$$
 \item[Case $s_2$:]
   $$\lambda_1=1+{2a\over 3}\in[1,3], \ \ \lambda_2=-1$$
 \item[Case $s_3$:]
   $$\lambda_1=-1, \ \ \lambda_2=3-{2a\over 3}\in [1,3]$$
 \item[Case $s_4$:] In this case $\lambda_1$ has a bulky form. But using Maple one can plot its graph
 (see Fig. \ref{ev1}). Therefore, $0<\lambda_1<1$ for any $a\in (0,3)$. Moreover, one can see that $\lambda_2=-1$.
   \end{itemize}
 \begin{figure}
\includegraphics[width=6cm]{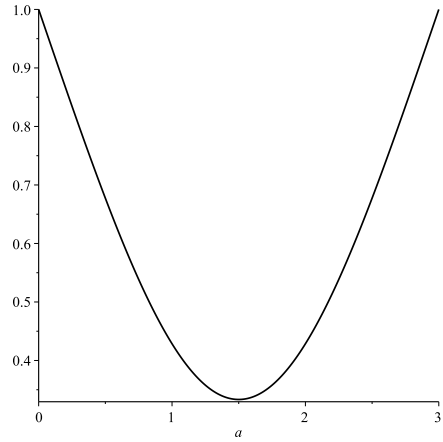}
\caption{The graph of the eigenvalue $\lambda_1(s_4)$ as function of parameter $a\in [0,3]$.}\label{ev1}
\end{figure}

Thus we have proved the following
\begin{pro} Fixed point $s_1$ is repeller. Points $s_2$ and $s_3$ are non hyperbolic (but semi-repeller\footnote{meaning that the second eigenvalue is greater than 1 in absolute value.}).
The fixed point $s_4$ is non-hyperbolic (but semi-attracting\footnote{meaning that the second eigenvalue is less than 1 in absolute value.}).
\end{pro}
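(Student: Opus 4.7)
The plan is to evaluate the Jacobian $J_W(y,z)$ displayed just before the proposition at each of the four fixed points $s_1,s_2,s_3,s_4$ (using their $(y,z)$-coordinates), read off the eigenvalues, and then compare each pair to the unit circle. Once the eigenvalues are in hand, classification according to the two definitions recalled from \cite{De} is immediate: $s_1$ must have both eigenvalues of modulus $>1$; $s_2,s_3,s_4$ will each have exactly one eigenvalue on the unit circle (so they are automatically non-hyperbolic) together with a second eigenvalue that is either outside the unit circle (semi-repeller) or inside (semi-attracting).

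For $s_1=(1,0,0)$, i.e.\ $(y,z)=(0,0)$, the matrix $J_W(0,0)$ is $3\cdot I_2$, so $\lambda_{1,2}=3$ and $s_1$ is a repeller. For $s_2=(1/3,0,2/3)$, i.e.\ $(y,z)=(0,2/3)$, the off-diagonal entry $(a-3)y$ vanishes, so $J_W$ is lower triangular; its diagonal entries yield $\lambda_1=3+(a-3)\cdot\tfrac{2}{3}=1+\tfrac{2a}{3}\in(1,3)$ and $\lambda_2=3-6\cdot\tfrac{2}{3}=-1$. An analogous upper-triangular reduction at $s_3$ (where $(y,z)=(2/3,0)$ forces the $-az$ entry to vanish) gives $\lambda_1=-1$ and $\lambda_2=3-\tfrac{2a}{3}\in(1,3)$. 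In both cases $|\lambda_2|=1$ together with $|\lambda_1|>1$ on the parameter range $a\in(0,3)$ proves the semi-repeller claim.

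The real work is the analysis at $s_4$. Write $D=a^2-3a+9$, so $y_\ast=2a/D$ and $z_\ast=2(3-a)/D$. Substituting into $J_W$ and simplifying each entry to a single fraction over $D$ yields explicit rational functions of $a$ for the trace $T(a)$ and determinant $\Delta(a)$. The clean strategy is not to compute the eigenvalues but instead to verify $\lambda=-1$ is a root by checking the algebraic identity $1+T(a)+\Delta(a)=0$; once this is done, the second eigenvalue is simply $\lambda_1=-1-T(a)=\Delta(a)\cdot(-1)^{-1}$, which reduces to a single rational function of $a$. Monotonicity or a direct computation of its supremum and infimum over $a\in(0,3)$ then gives $0<\lambda_1(a)<1$, matching the behavior shown in Figure \ref{ev1}.

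The main obstacle is the algebraic cleanup in the $s_4$ case: the Jacobian entries are each fractions with denominator $D$, and verifying $1+T+\Delta\equiv 0$ as a polynomial identity in $a$ requires combining them over the common denominator $D^2$ and collecting terms. This is where the excerpt relies on Maple; a by-hand proof can be made transparent by reducing the three identities $1+T+\Delta=0$, $0<\Delta<1$ into a single polynomial inequality in $a$ that can be checked by elementary estimates on $(0,3)$, after which the proposition follows.
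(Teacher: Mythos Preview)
Your treatment of $s_1,s_2,s_3$ is exactly what the paper does: evaluate $J_W$ at the corresponding $(y,z)$, observe triangularity (or $J_W=3I_2$ at the origin), and read off the diagonal as eigenvalues.

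At $s_4$ you diverge from the paper. The paper simply reports that the eigenvalues have a ``bulky form'', invokes Maple to plot $\lambda_1(a)$ (Figure~\ref{ev1}) and asserts that $0<\lambda_1<1$ and $\lambda_2=-1$. Your plan --- compute $T(a)=\mathrm{tr}\,J_W(s_4)$ and $\Delta(a)=\det J_W(s_4)$, verify the polynomial identity $1+T(a)+\Delta(a)\equiv 0$ to certify that $-1$ is always an eigenvalue, and then study the remaining eigenvalue as a single explicit rational function of $a$ --- is a genuine improvement: it replaces a computer-algebra appeal by a checkable algebraic identity and an elementary one-variable inequality on $(0,3)$. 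This is cleaner and fully rigorous, whereas the paper's argument is essentially graphical.

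Two small sign slips to fix before you carry this out. If $-1$ is a root of $\lambda^2-T\lambda+\Delta$, then the other eigenvalue is $\lambda_1=T+1=-\Delta$, not $-1-T$; and consequently the range condition you must check is $-1<\Delta(a)<0$ (equivalently $-2<T(a)<-1$), not $0<\Delta<1$. With those corrections your outline goes through: writing $D=a^2-3a+9$, the entries of $J_W(y_\ast,z_\ast)$ are all rational in $a$ with denominator $D$, so $1+T+\Delta$ clears to a polynomial identity over $D^2$ that one verifies by expansion, and the bound on $-\Delta(a)$ reduces to a polynomial inequality in $a$ that is elementary on $(0,3)$.
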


2.2. {\it Invariant sets.} Let $a\in (0,3)$. Introduce the following sets:
$$M_1=\{(y,z)\in T: y=0\}, \ \ M_2= \{(y,z)\in T: z=0\},$$
$$M_3=\{(y,z)\in T: z={3-a\over a}y\}.$$
$$M_4=\{(y,z)\in T: z<{3-a\over a}y\}, \ \ M_5=\{(y,z)\in T: z>{3-a\over a}y\}.$$
\begin{lemma} The sets $M_i$, $i=1, \dots, 5$ are invariant with respect to the operator $W$, i.e. (\ref{W}).
\end{lemma}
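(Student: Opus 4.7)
The plan is to verify invariance of each $M_i$ directly from the reduced formulas (\ref{W}) for $W$. The sets $M_1=\{y=0\}$ and $M_2=\{z=0\}$ are immediate, since $y'$ carries an overall factor of $y$ and $z'$ carries an overall factor of $z$, so whichever coordinate starts at zero stays at zero. This reduces the lemma to the three sets $M_3,M_4,M_5$, which partition $T$ according to the sign of the linear form $\Delta(y,z):=(3-a)y-az$.

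The heart of the argument is the transformation identity
$$
\Delta(y',z')\;=\;3\,x\,\Delta(y,z),\qquad x:=1-y-z,
$$
which I would derive as follows. Expand $(3-a)y'-az'$ using (\ref{W}). The linear-in-$y,z$ contribution collects to $3[(3-a)y-az]=3\Delta(y,z)$, while the quadratic part works out to $-3(3-a)y^2+3(2a-3)yz+3az^2$. The one genuine algebraic step is to recognize the factorization
$$
(3-a)y^2-(2a-3)yz-a z^2\;=\;\bigl((3-a)y-az\bigr)(y+z),
$$
which turns the quadratic part into $-3\Delta(y,z)(y+z)$; adding the linear contribution yields $3\Delta(y,z)(1-y-z)=3x\,\Delta(y,z)$.

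Granting this identity, the conclusion is automatic. Because $x\geq 0$ on $T$, the sign of $\Delta(y',z')$ agrees with the sign of $\Delta(y,z)$: the locus $\Delta=0$ is preserved, giving $W(M_3)\subset M_3$, and the strict inequalities $\Delta>0$, $\Delta<0$ are preserved on the interior $\{x>0\}$, giving $W(M_4)\subset M_4$ and $W(M_5)\subset M_5$. The only real obstacle in this plan is spotting the factorization above; everything else is routine polynomial expansion and sign tracking, and the same factorization will be useful later for describing how trajectories in $M_4\cup M_5$ are pushed toward $M_3$ along the boundary $y+z=1$.
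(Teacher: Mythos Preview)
Your proof is correct and follows essentially the same route as the paper: both arguments hinge on the identity $(3-a)y'-az'=3x\bigl((3-a)y-az\bigr)$, with the paper deriving it by equating the two expressions for $yz$ extracted from (\ref{W}) while you expand and factor directly. Both versions share the same small imprecision on the edge $x=0$ of $M_4$ and $M_5$, where the image in fact lands in $M_3$; the paper notes this explicitly and then restricts to $x>0$, just as you do.
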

\begin{proof} It is easy to see that $W(M_i)\subset M_i$, $i=1,2$. For the case $i=3,4,5$ assume $(y,z)\in M_i$,
we shall show that $(y',z')=W(y,z)\in M_i$. From the first and second equalities of (\ref{W}) we find
$$yz={1\over a}(y'-3xy), \ \ \  yz={1\over 3-a}(z'-3xz).$$
Consequently
\begin{equation}\label{in}{1\over a}(y'-3xy)={1\over 3-a}(z'-3xz)\ \ \Leftrightarrow \ \  (3-a)y'-az'=3x((3-a)y-az).
\end{equation}
From the last equality it follows that
$$(3-a)y-az=0 \ \ \Leftrightarrow \ \ (3-a)y'-az'=0.$$

Thus $M_3$ is an invariant. Moreover, from (\ref{in}) it follows
that if $x=0$ then $(3-a)y'-az'=0$, this means that any point $(y,z)$ with $y+z=1$ (i.e. $x=0$)
after first iteration goes inside of the invariant set $M_3$. Therefore, consider $x>0$, then
from (\ref{in}) we get
$$(3-a)y-az>0 \ \ \Leftrightarrow \ \ (3-a)y'-az'> 0.$$
$$(3-a)y-az<0 \ \ \Leftrightarrow \ \ (3-a)y'-az'< 0.$$
Thus $M_4$ and $M_5$ are invariant sets.
\end{proof}
%Denote
%$$N=\{(y,z)\in T: y+z=1\}.$$
Note that
\begin{equation}\label{n}
T=\bigcup_{i=1}^5M_i.
\end{equation}

2.3. {\it Trajectories.} In this subsection for any initial point $(y^{(0)}, z^{(0)})\in T$ we investigate behavior
of the trajectories $(y^{(n)}, z^{(n)})=W^n(y^{(0)}, z^{(0)}), \ \ n\geq 1.$

By (\ref{n}) it suffices to study the trajectories on each invariant set.\\

{\it Case $M_1$:} Reducing (\ref{W}) on $M_1$ we get one-dimensional dynamical
system generated by $z'=3z(1-z)$ which is a logistic map with parameter $\mu=3$.
For this function it is known (see Remark \ref{ma} and \cite[page 10]{SKS}) that it has repeller fixed point $z=0$ and
attracting fixed point $z=2/3$. Consequently, for trajectory of the operator (\ref{W}) on the invariant set $M_1$ we have
$$\lim_{n\to\infty}(y^{(n)}, z^{(n)})=\lim_{n\to\infty}W^n(y^{(0)}, z^{(0)})=\left\{\begin{array}{ll}
(0,0), \ \ \mbox{if} \ \ (y^{(0)}, z^{(0)})=(0,0)\\[2mm]
(0,2/3), \ \ \mbox{if} \ \ (y^{(0)}, z^{(0)})=(0,z^{(0)}), z^{(0)}>0.
\end{array}\right.$$

{\it Case $M_2$:} is similar to the case $M_1$.\\

{\it Case $M_3$:} Restricting $W$ on $M_3$ we get $y'=3y(1-{9-3a+a^2\over 3a}y)$. Denoting $t={9-3a+a^2\over 3a}y$
and $t'={9-3a+a^2\over 3a}y'$ the last mapping can be written as $t'=3t(1-t)$. Therefore, this case also conjugate to
the above cases and the following holds

$$\lim_{n\to\infty}(y^{(n)}, z^{(n)})=\left\{\begin{array}{ll}
(0,0), \ \ \mbox{if} \ \ (y^{(0)}, z^{(0)})=(0,0)\\[2mm]
\left({2a\over a^2-3a+9}, {2(3-a)\over a^2-3a+9} \right), \ \ \mbox{if} \ \ (y^{(0)}, z^{(0)})=(y^{(0)}, {3-a\over a}y^{(0)}), y^{(0)}>0.
\end{array}\right.$$

{\it Case $M_4$:} In this case for $y>0$ we have ${z\over y}<{3-a\over a}$ and
${z'\over y'}<{3-a\over a}$. Iterating the inequalities we get
\begin{equation}\label{p}
P_n:={z^{(n)}\over y^{(n)}}<{3-a\over a}, \ n\geq 1.
\end{equation}
By (\ref{W}) we have
$$P_{n+1}=P_n\cdot {3-ay^{(n)}-3z^{(n)}\over 3-3y^{(n)}+(a-3)z^{(n)}}>P_n,$$
because:
$${3-ay^{(n)}-3z^{(n)}\over 3-3y^{(n)}+(a-3)z^{(n)}}> 1\ \ \Leftrightarrow \ \ -ay^{(n)}-3z^{(n)}>-3y^{(n)}+(a-3)z^{(n)} \ \ \Leftrightarrow \ \ P_n<{3-a\over a}.$$
Hence $P_n$ is strictly increasing and with the upper bound ${3-a\over a}$. Consequently, its limit exists and equal to
${3-a\over a}$ as the supremum of $P_n$ in $M_3\cup M_4$.

{\it Case $M_5$:} This case is similar to the case $M_4$, now $P_n>{3-a\over a}$
and $P_n$ is strictly decreasing with limit ${3-a\over a}$ too.

Thus for any initial point from $M_4\cup M_5$ the set of limit points of its trajectory is subset of $M_3$.

Now we give limit points
of the operator (\ref{V3}). To do this we introduce:
$$\hat M_i=\{(x,y,z)\in S^2: (y,z)\in M_i\}, \ \ i=1,2,3,4,5.$$
Then
$$S^2=\bigcup_{i=1}^5\hat M_i.$$

Summarizing above-mentioned results about trajectories of $W$,  we obtain the following

\begin{thm} If $(x^{(0)}, y^{(0)}, z^{(0)})\in \hat M_i$ for some $i=1,2,3,4,5$ then for the operator (\ref{V3})
the following holds
$$\lim_{n\to \infty}V^n(x^{(0)}, y^{(0)}, z^{(0)})=\left\{\begin{array}{lllll}
s_1 \ \ \mbox{if} \ \ x^{(0)}=1\\[2mm]
s_2 \ \ \mbox{if} \ \ i=1, z^{(0)}>0\\[2mm]
s_3 \ \ \mbox{if} \ \ i=2, y^{(0)}>0\\[2mm]
s_4 \ \ \mbox{if} \ \ i=3, y^{(0)}>0\\[2mm]
\in \hat M_3 \ \ \mbox{if} \ \ i=4, 5,
\end{array}\right.$$
where $s_i$, $i=1,2,3,4$ are defined in (\ref{ss}).
\end{thm}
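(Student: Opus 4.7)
The proof will essentially assemble the case analysis carried out in Subsection 2.3, so the plan is to treat each of the five invariant sets $\hat M_i$ in turn and identify the limit of the trajectory $W^n(y^{(0)}, z^{(0)})$ on each, then convert back to $V^n(x^{(0)}, y^{(0)}, z^{(0)})$ using $x = 1 - y - z$. Throughout, the key fact used repeatedly will be that the logistic map $g_3(t) = 3t(1-t)$ on $[0,1]$ has $t=0$ as a repelling fixed point and $t = 2/3$ as an attracting fixed point whose basin is all of $(0,1)$; this is recorded in Remark \ref{ma} with $\mu = 3$.

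First I would dispose of the boundary cases. If $x^{(0)} = 1$ then $(y^{(0)}, z^{(0)}) = (0,0)$, which is a fixed point of $W$, so the trajectory is constantly $s_1$. On $\hat M_1$ (so $y^{(0)} = 0$), the operator $W$ reduces to the single equation $z' = 3z(1-z)$, and for $z^{(0)} \in (0,1]$ the logistic dynamics drives $z^{(n)} \to 2/3$, hence $V^n \to (1/3, 0, 2/3) = s_2$. The $\hat M_2$ case with $y^{(0)} > 0$ is symmetric and yields convergence to $s_3$. On $\hat M_3$, where $z = \frac{3-a}{a} y$, the restriction of $W$ becomes $y' = 3y\bigl(1 - \frac{a^2 - 3a + 9}{3a} y\bigr)$, and via the conjugation $t = \frac{a^2 - 3a + 9}{3a} y$ this is again $t' = 3t(1-t)$; so for $y^{(0)} > 0$, $t^{(n)} \to 2/3$, which translates back into $(y^{(n)}, z^{(n)}) \to \bigl(\frac{2a}{a^2 - 3a + 9}, \frac{2(3-a)}{a^2 - 3a + 9}\bigr)$, giving $V^n \to s_4$.

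The only remaining step is the last line of the theorem: for $(y^{(0)}, z^{(0)}) \in M_4 \cup M_5$, the limit points of the trajectory lie in $\hat M_3$. This is where the monotonicity argument already carried out in Subsection 2.3 comes in. I would cite that the ratio $P_n = z^{(n)}/y^{(n)}$ was shown to be strictly monotone (increasing on $M_4$ and decreasing on $M_5$) with the bound $\frac{3-a}{a}$, hence $P_n \to \frac{3-a}{a}$. Any limit point $(y^*, z^*)$ of the trajectory therefore satisfies $z^*/y^* = \frac{3-a}{a}$, placing it in $\hat M_3$, which is exactly the claim.

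The one subtlety — and the only place where care is needed beyond citing the preceding analysis — is the final case: monotone convergence of $P_n$ to $\frac{3-a}{a}$ is not by itself enough to guarantee that $(y^{(n)}, z^{(n)})$ converges as a point in $T$, only that every convergent subsequence has its limit in $M_3$. That is why the theorem states the limit belongs to $\hat M_3$ rather than pinning it down to $s_4$. I would make this distinction explicit, noting that full convergence to $s_4$ from $\hat M_4 \cup \hat M_5$ would require an additional argument (consistent with the fact that $s_4$ is only semi-attracting per the preceding proposition), and this is why the theorem's conclusion in cases $i=4,5$ is stated as membership in $\hat M_3$ rather than equality to $s_4$.
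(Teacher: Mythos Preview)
Your proposal is correct and follows essentially the same approach as the paper: the theorem is presented there as a summary of the case-by-case analysis of Subsection~2.3, and you reproduce exactly that analysis---reducing $M_1$, $M_2$, $M_3$ to the logistic map $3t(1-t)$ via the same conjugations, and handling $M_4$, $M_5$ via the monotonicity of the ratio $P_n = z^{(n)}/y^{(n)}$. Your added remark that convergence of $P_n$ only pins down the \emph{limit points} in $\hat M_3$ rather than the full limit is a welcome clarification that the paper itself leaves implicit (and which motivates the Conjecture that follows).
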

Based on numerical analysis we make the following

{\bf Conjecture.} If $(x^{(0)}, y^{(0)}, z^{(0)})\in \hat M_4\cup \hat M_5$ then for the operator (\ref{V3})
the following holds
$$\lim_{n\to \infty}V^n(x^{(0)}, y^{(0)}, z^{(0)})=s_4.$$

Thus the operator (\ref{V3}) does not generate a chaotic dynamical system.
%Thus the phase
%
%\begin{figure}
%\includegraphics[width=10cm]{sim.eps}
%\caption{The graphs of the eigenvalue $\lambda_1(s_4)$ as function of parameter $a\in [0,3]$.}\label{ev1}
%\end{figure}

%\section*{Acknowledgements}
% We thank the referee for many comments which was useful to improve the presentation of the paper.

\end{document}